\begin{document}
\title{Severi inequality for varieties of maximal Albanese dimension}
\author{Tong Zhang}
\date{\today}

\address{Department of Mathematics, University of Alberta, Edmonton, Alberta T6G 2G1, Canada}
\email{tzhang5@ualberta.ca}
\maketitle

\theoremstyle{plain}
\newtheorem{theorem}{Theorem}[section]
\newtheorem{lemma}[theorem]{Lemma}
\newtheorem{coro}[theorem]{Corollary}
\newtheorem{prop}[theorem]{Proposition}
\newtheorem{defi}[theorem]{Definition}

\theoremstyle{remark}\newtheorem{remark}[theorem]{Remark}

\tableofcontents

\section{Introduction}
\subsection{Main result}
We work over the complex number field $\mathbb C$. Let $X$ be a projective and irregular variety over $\mathbb C$, i.e., $h^1(\mathcal O_X)>0$.
We say that $X$ is \textit{of maximal Albanese dimension}, if the image of $X$ under its Albanese map has the same dimension as $X$ itself.

A typical example of a variety of maximal Albanese dimension which is also of general type
can be constructed as follows:
fix an Abelian variety $A$ of dimension $n$ and
a very ample line bundle $H$ on $A$. Let $X$ be the double cover of $A$ branched along a smooth divisor $L$ such that $L \sim_{\rm{lin}} 2H$.
It is easy to see that $X$ is of maximal Albanese dimension and $X$ is of general type since $K_X$ is ample. Moreover, from the double cover formula, we have
$$
K^n_X=2H^n.
$$
By the Hirzebruch-Riemann-Roch formula,
$$
\chi(K_X)=(-1)^n\chi(\mathcal O_X)=\chi(H)=\frac{H^n}{n!}.
$$
Therefore, we have
$$
K^n_X = 2 n!\chi(K_X).
$$

In this paper, we prove the following theorem, which shows that $2 n!\chi(K_X)$ turns out to be the optimal lower bound of $K^n_X$.
\begin{theorem} \label{severi}
Let $X$ be a projective, normal, minimal and Gorenstein $n$-dimensional variety of general type. Suppose $X$ is of maximal Albanese dimension. Then
$$
K^n_X \ge  2 n! \chi(K_X).
$$
\end{theorem}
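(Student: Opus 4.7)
I plan to follow the asymptotic strategy of Pardini's proof for surfaces. Let $a\colon X\to A=\Alb(X)$ be the Albanese map, which is generically finite onto its image by hypothesis. For each $m\ge 1$ form the fibre product $\pi_m\colon X_m := X\times_A A\to X$ along multiplication-by-$m$ on $A$. Each $\pi_m$ is étale of degree $m^{2q}$ where $q=h^1(\mathcal O_X)$, so $X_m$ inherits the assumptions that it is normal, Gorenstein, minimal, of general type and of maximal Albanese dimension. In particular $K_{X_m}^n = m^{2q}K_X^n$ and $\chi(\omega_{X_m}) = m^{2q}\chi(\omega_X)$, so both sides of the desired Severi inequality scale identically. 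The virtue of the tower is that for any fixed ample $H$ on $A$ its pullback $H_m=(a\circ\pi_m)^*H$ satisfies $H_m^n/K_{X_m}^n = O(m^{-2q})$, so $H_m$ is an arbitrarily small auxiliary polarization on $X_m$ as $m$ grows.

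\textbf{Sharp bound via generic vanishing.} On each $X_m$, Hacon's generic vanishing theorem gives $h^i(X_m,\omega_{X_m}\otimes P)=0$ for $i>0$ and general $P\in\mathrm{Pic}^0(X_m)$, so
$$
h^0(X_m,\omega_{X_m}\otimes P)=\chi(\omega_{X_m}),
$$
and the line bundle $\omega_{X_m}\otimes P$ is continuously globally generated; hence it defines a rational map $\varphi_P\colon X_m\dashrightarrow \mathbb P^{\chi(\omega_{X_m})-1}$. The heart of the proof is to establish, uniformly in $m$, a Clifford-plus-Pardini style inequality
$$
K_{X_m}^n \;\ge\; 2\, n!\, \chi(\omega_{X_m}) - \varepsilon(H_m),
$$
with an error $\varepsilon(H_m)$ that is polynomial in $m$ of order strictly less than $2q$. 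The factor $2$ has its origin in Pardini's observation that, generically along the tower, $\varphi_P$ factors through a degree-two cover of its image; the factor $n!$ reflects that this image is non-degenerate in the ambient projective space. Both factors are most cleanly packaged through the \emph{continuous rank function} machinery of Barja and Pardini--Stoppino, in which the leading coefficient of $h^0(X_m,\omega_{X_m}\otimes tH_m\otimes P)$ viewed as a function of $t\in\mathbb Q_{>0}$ computes the desired slope.

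\textbf{Limit and main obstacle.} Dividing the displayed inequality by $m^{2q}$ and taking $m\to\infty$ collapses $\varepsilon(H_m)$ and yields $K_X^n\ge 2n!\chi(\omega_X)$. The base case $n=2$ is Pardini's theorem, which I would use to anchor an induction on dimension whenever the continuous-rank argument requires Severi on a generic fibre of an auxiliary pencil cut out by sections of $H_m$. The essentially higher-dimensional difficulty is extracting the \emph{sharp} constant $2n!$ rather than just some universal one: the naive codimension bound for non-degenerate subvarieties of projective space gives only linear growth in $\chi$, so producing the factorial $n!$ genuinely requires a careful analysis of how $\varphi_P$ factors through the Albanese image of $X_m$, combined with tight control of the error term $\varepsilon(H_m)$. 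Preserving the normal, Gorenstein and minimal hypotheses through the étale cover is automatic; controlling possible singularities on any intermediate blow-ups used to resolve $\varphi_P$ is a secondary but genuine technical point.
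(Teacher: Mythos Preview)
Your overall framework---Pardini's \'etale-cover tower $X_m\to X$ and the limiting argument---matches the paper's exactly, and your bookkeeping of how $K^n$ and $\chi$ scale is correct. The genuine gap is in what you call the ``heart of the proof'': you assert the key inequality $K_{X_m}^n \ge 2n!\,\chi(\omega_{X_m}) - \varepsilon(H_m)$ but do not prove it, and the geometric heuristic you offer for the constants is not how either this paper or Barja obtains them. The factor $2$ does \emph{not} come from $\varphi_P$ factoring through a degree-two cover of its image (no such factorization is invoked anywhere), and, as you yourself concede, non-degeneracy of the image in projective space gives only a linear bound in $\chi$, not the factorial $n!$. Naming the continuous-rank machinery is not the same as supplying the argument; Barja's actual mechanism is Xiao's Harder--Narasimhan method, not the degree-two picture you sketch.

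The paper fills this gap by a different device: a \emph{relative Noether inequality} for fibered $n$-folds (Proposition~\ref{strongrelnoether} and Corollary~\ref{essential}), proved by induction on $n$. One slices by a pencil in $|B|$ with $B$ pulled back from an ample on the Albanese, then runs a reduction process that repeatedly subtracts fibres from $L$, blows up base loci, and tracks the resulting filtration (Theorem~\ref{decomposition} and Proposition~\ref{algcase1}). The constant $\tfrac{1}{2n!}$ emerges inductively: the $\tfrac{1}{2}$ is Clifford's inequality on the curve at the bottom of the induction, and each added dimension contributes a further factor from the fibre-by-fibre comparison. The output is a bound $h^0(\varepsilon_d^*K_{X_d}) \le \tfrac{1}{2n!}K_{X_d}^n + O(d^{2m-2})$, where the error is lower order precisely because $\mu_d^*H\sim_{\rm num} d^2H$.

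Finally, the paper's use of generic vanishing differs from yours: rather than twisting $\omega$ by a general $P\in\mathrm{Pic}^0$, it applies Green--Lazarsfeld to show $h^i(\CO_{X'_d})/d^{2m}\to 0$ for $i<n$ (Theorem~\ref{albanese}), so that $h^0(K_{X'_d})$ and $d^{2m}\chi(K_X)$ agree in the limit. No map $\varphi_P$ or continuous rank appears.
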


This theorem was known previously for small $n$.
For example, when $n=1$, the result just says that $\deg K_X=2g(X)-2$ for a smooth curve $X$ of positive genus. When $n=2$, the theorem, known as Severi inequality for surfaces, is proved by Pardini in \cite{Pa} using the slope inequality in \cite{CH, Xi1} and a clever covering (Albanese lifting) method. Pardini's theorem is reproved in \cite{YZ2} using the relative Noether inequality for fibered surfaces. Moreover, it is proved in \cite{YZ2} that the theorem also holds in positive characteristic.

We point out that Theorem \ref{severi} is also independently proved by Barja \cite{Ba} during the preparation of this paper. His proof
also relies on Pardini's method. The difference between our approaches is that, he uses a suitable version of Xiao's method \cite{Xi1} on
the Harder-Narasimhan filtration and continuous linear series developed by Mendes Lopes, Pardini and Pirola \cite{MPP}, while we use the relative Noether inequality which can be viewed as a generalization of \cite{YZ2}
in the surface case. 

Also, it should be pointed out that using the relative Noether inequality, we can get an upper bound of $h^0(L)$ for arbitrary nef $L$, not only the continuous rank of $L$ (see \cite{Ba}). In particular, using the relative Noether inequality for fibered surfaces, we can get the slope inequality of Cornalba-Harris-Xiao in arbitrary characteristic \cite{YZ2}, although it only involves some basic theories on linear series.
Furthermore, one should note that the relative Noether formula here still holds in positive characteristic. In an ongoing paper joint with Yuan, we also consider the arithmetic version of this inequality in Arakelov geometry.

\subsection{Idea of the proof}
As pointed before, the idea is similar to \cite{YZ2}. We first prove the relative Noether inequality for fibered $n$-folds by induction on the dimension
(see Proposition \ref{strongrelnoether}). As a result, we obtain a Hilbert-Samuel type result as Corollary \ref{essential}.
Finally, applying Corollary \ref{essential} to the $d$-th Albanese lifting of (a smooth model of) $X$,
we can get Theorem \ref{severi} by taking the limit. 

However, when we take the limit, the intermediate cohomologies are involved. Therefore,
we also need to control the asymptotic behavior of such cohomologies. Fortunately, it
can be implied by the generic vanishing theorem \cite{GL} (see Theorem \ref{albanese}), with which we can finish the whole proof.

{\bf Acknowledgement.}
The author would like to thank Xi Chen for a lot of helpful discussions.
Part of the idea in this paper comes from the joint papers of the author with Xinyi Yuan on Arakelov geometry and algebraic geometry \cite{YZ1, YZ2}.
The author is very grateful to Xinyi Yuan for his inspiring communications.

\section{Preliminaries}
We first assume $X$ to be a
smooth projective variety of dimension $n$.
Let $f: X \to Y$ be a fibration from $X$ to a smooth projective curve $Y$ with smooth general fiber
$F$.
The above notations will be used throughout Section 2 and 3.

\subsection{The reduction process}
For any nef line bundle $L$ on $X$, we
can find an integer $e_L > 0$ such that
\begin{itemize}
\item $L-e_LF$ is not nef;
\item $L-eF$ is nef for any integer $e<e_L$.
\end{itemize}

We have the following lemma.
\begin{lemma} \label{horizontal}
Let $L$ be a nef line bundle on $X$ such that $h^0(L-e_LF) > 0$.
Then the base locus of the linear system $|L-e_LF|$ has horizontal part.
\end{lemma}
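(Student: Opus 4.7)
The plan is to argue by contradiction: assume that $B := \mathrm{Bs}(|L - e_L F|)$ has no horizontal component, i.e., every irreducible component of $B$ is contained in a fiber of $f$, and then deduce that $M := L - e_L F$ is nef. This would contradict the defining property of $e_L$ and finish the proof.

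To verify nefness, I would test the intersection number $M \cdot C$ on an arbitrary irreducible curve $C \subset X$, splitting into two cases according to whether $C$ is vertical or horizontal with respect to $f$. In the vertical case, $f(C)$ is a point, and the projection formula yields $F \cdot C = f^{*}(\mathrm{pt}) \cdot C = 0$, so $M \cdot C = L \cdot C \ge 0$ because $L$ is nef. In the horizontal case, $C$ maps dominantly onto $Y$ and therefore cannot be contained in any proper vertical subvariety of $X$; combined with the assumption that every component of $B$ is vertical, this forces $C \not\subset B$. Using the set-theoretic identity $B = \bigcap_{D \in |M|} \mathrm{Supp}(D)$ together with the hypothesis $h^{0}(M) > 0$, I can then produce an effective $D \in |M|$ that does not contain $C$, so $M \cdot C = D \cdot C \ge 0$ by proper intersection.

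Combining the two cases gives $M \cdot C \ge 0$ for every irreducible curve $C \subset X$, hence $M = L - e_L F$ is nef. This contradicts the fact that $L - e_L F$ is not nef built into the definition of $e_L$, so $B$ must have a horizontal component.

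I do not foresee a genuine obstacle in executing this plan; the only step requiring a line of justification is the passage from \emph{$C$ is not contained in the base locus} to \emph{there exists a member of $|M|$ avoiding $C$}, which is immediate from the definition of the base locus. In particular, the hypothesis $h^{0}(L - e_L F) > 0$ is used exactly once, to guarantee that $|M|$ is nonempty so that the horizontal-curve case makes sense.
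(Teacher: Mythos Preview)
Your proof is correct and follows essentially the same idea as the paper's: both arguments rest on the observation that a curve $C$ with $(L-e_LF)\cdot C<0$ must satisfy $F\cdot C>0$ (since $L$ is nef) and hence be horizontal, forcing such a curve into the base locus. The paper phrases this directly---picking such a negative curve from non-nefness and noting it lies in the base locus---whereas you run the contrapositive with a vertical/horizontal case split, but the content is the same.
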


\begin{proof}
By the definition of $e_L$, $L-e_LF$ is not nef. Then there
exists an irreducible and reduced curve $C$ on $X$ such that
$$
(L-e_LF)C<0.
$$
Hence $C$ is contained in the base locus of $|L-e_LF|$. It suffices to prove that $C$ is horizontal. Since $L$ is nef, $LC \ge 0$. It gives
$FC > 0$, which implies that $C$ can not be vertical.
\end{proof}

\begin{lemma} \label{decomposition1}
Let $L$ be a line bundle on $X$ such that $|L|$ is base point free and that $h^0(L-e_LF) > 0$.
Then we have the following decomposition:
$$
\pi^*(L-e_LF)= L_1 + Z_1.
$$
Here $\pi : X_1 \to X$ is a composition of blow-ups of $X$ such that the proper transformation of $|L-e_LF|$ is base point free.
$L_1$ and $Z_1$ are line bundles on $X_1$.
Moreover, we have
\begin{itemize}
\item[(1)] $|L_1|$ is base point free;
\item[(2)] $Z_1 \ge 0$;
\item[(3)] $h^0(L_1) < h^0(L)$.
\end{itemize}
\end{lemma}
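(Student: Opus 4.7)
The plan is to resolve the base locus of $|L - e_L F|$ by a sequence of blow-ups $\pi : X_1 \to X$, so that the proper transform of this linear system becomes base-point free. The resulting decomposition of $\pi^*(L - e_L F)$ will split into a movable part $L_1$ and a fixed effective part $Z_1$, making items (1) and (2) immediate; the real content of the lemma is item (3), the strict drop in the number of sections.

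To set up the decomposition, I use that $h^0(L - e_L F) > 0$ implies the base ideal of $|L - e_L F|$ is a nonzero coherent ideal sheaf on $X$. Blowing up this ideal and resolving the singularities of the blow-up by Hironaka produces a birational morphism $\pi : X_1 \to X$ from a smooth variety on which the pullback of $|L - e_L F|$ has base-point-free movable part. Writing $\pi^*(L - e_L F) = L_1 + Z_1$ with $L_1$ the movable part and $Z_1 \ge 0$ the fixed part, items (1) and (2) follow by construction.

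For (3), I first identify $h^0(L_1)$ with $h^0(L - e_L F)$. Since $\pi$ is a birational morphism between smooth projective varieties, $\pi_* \mathcal{O}_{X_1} = \mathcal{O}_X$, whence $h^0(\pi^*(L - e_L F)) = h^0(L - e_L F)$. Moreover, every effective divisor in the complete linear system $|\pi^*(L - e_L F)|$ contains the fixed part $Z_1$, so the complete linear system decomposes as $|L_1| + Z_1$; this gives $h^0(L_1) = h^0(L - e_L F)$. It therefore suffices to establish $h^0(L - e_L F) < h^0(L)$.

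The key input for this strict inequality is the base-point freeness of $|L|$ combined with $e_L \ge 1$ (which is forced by the definition, since $e_L > 0$ is an integer). From the short exact sequence
$$
0 \to \mathcal{O}_X(L - F) \to \mathcal{O}_X(L) \to \mathcal{O}_F(L|_F) \to 0
$$
associated to a general fiber $F$, the restriction $H^0(X, L) \to H^0(F, L|_F)$ is nonzero because $|L|$ has no base points, forcing $h^0(L) > h^0(L - F)$. Since $(e_L - 1) F$ is effective, multiplication by its defining section yields $h^0(L - e_L F) \le h^0(L - F)$, and chaining produces $h^0(L_1) = h^0(L - e_L F) < h^0(L)$, proving (3). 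The only step requiring real care is the identification of $h^0(L_1)$ with $h^0(L - e_L F)$; everything else is a short diagram chase with the fiber restriction sequence.
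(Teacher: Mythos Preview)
Your argument is correct and follows the same route as the paper: resolve the base locus of $|L-e_LF|$ to obtain $L_1$ and $Z_1$, then deduce (3) from base-point freeness of $|L|$ together with $e_L>0$. The paper's proof simply asserts $h^0(L) > h^0(L-e_LF) = h^0(L_1)$ in one line; you have unpacked this via the restriction sequence to $F$, which is exactly the justification the paper is suppressing.
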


\begin{proof}
Let $L_1$ be the proper transformation of $|L-e_LF|$ on $X_1$. It is automatically base point free. Now $Z_i$ is the fixed part of $|\pi^*(L-e_LF)|$. So it is effective. Furthermore, since $|L|$ is base point free and $e_L>0$,
it follows that
$$
h^0(L) > h^0(L-e_LF) = h^0(L_1).
$$
\end{proof}

We have the following general theorem.
\begin{theorem}\label{decomposition}
With the above notations. Let $L$ be a nef line bundle on $X$.
Then we have the following quadruples
$$
\{(X_i, L_i, Z_i, a_i), \quad i=0, 1, \cdots, N\}
$$
with the following properties:
\begin{enumerate}
\item $(X_0, L_0, Z_0, a_0)=(X, L, 0, e_L)$.
\item For any $i=0, \cdots, N-1$, $\pi_i: X_{i+1} \to X_i$ is a composition of blow-ups of $X_i$ such that the proper transformation of $|L_i-a_iF_i|$ is base point free. Here $F_0=F$, $F_{i+1}=\pi^*_{i} F_{i}$ and $a_i=e_{L_i}$. Moreover, we have the decomposition
$$
\pi^*_i(L_i-a_iF_i)= L_{i+1} + Z_{i+1}
$$
such that $|L_{i+1}|$ is base point free and $Z_{i+1} \ge 0$.
\item We have $h^0(L_0) > h^0(L_1) > \cdots > h^0(L_N)>h^0(L_N-a_NF_N)=0$. Here $a_N=e_{L_N}$.
\end{enumerate}
\end{theorem}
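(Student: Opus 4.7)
The plan is to build the quadruples one at a time, iterating Lemma \ref{decomposition1} and letting the process terminate once $h^0(L_i - a_iF_i)$ first drops to zero. The termination is essentially automatic, since Lemma \ref{decomposition1} guarantees a strict decrease of $h^0$ at each step.

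For the base case we set $(X_0, L_0, Z_0, a_0) = (X, L, 0, e_L)$ as in condition (1). To invoke Lemma \ref{decomposition1} at step $i$ we need $|L_i|$ to be base point free. Since the theorem only assumes $L$ nef, we first arrange for $|L_0|$ to be base point free by absorbing a resolution of the base locus of $|L|$ into $\pi_0$ and replacing $L$ by its (base point free) proper transform. For the inductive step, assume the quadruples have been produced up to index $i$ with $|L_i|$ base point free. If $h^0(L_i - a_iF_i) = 0$, declare $N = i$ and stop; this is exactly the terminal condition in (3). Otherwise $h^0(L_i - a_iF_i) > 0$, and Lemma \ref{decomposition1}, applied to $L_i$ on $X_i$ with respect to the induced fibration to $Y$ and the fiber class $F_i = \pi_{i-1}^* F_{i-1}$, produces a blow-up $\pi_i: X_{i+1} \to X_i$ resolving the base locus of $|L_i - a_iF_i|$ together with a decomposition
$$
\pi_i^*(L_i - a_iF_i) = L_{i+1} + Z_{i+1}
$$
in which $|L_{i+1}|$ is base point free, $Z_{i+1}$ is effective, and $h^0(L_{i+1}) < h^0(L_i)$. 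We then set $a_{i+1} = e_{L_{i+1}}$ and continue.

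Termination is immediate from the fact that $\{h^0(L_i)\}$ is a strictly decreasing sequence of non-negative integers, so the iteration halts after finitely many steps at some index $N$ with $h^0(L_N - a_NF_N) = 0$, which gives (3). Properties (1) and (2) follow directly from how each new quadruple was manufactured by Lemma \ref{decomposition1}. The one point that requires verification is that each $L_i$ remains nef, so that $a_i = e_{L_i}$ is well defined and positive; this is guaranteed inductively by the base point freeness of $|L_i|$, which the lemma itself preserves. I do not anticipate any serious obstacle: the argument is essentially a systematic iteration of Lemma \ref{decomposition1}, and the only mildly subtle ingredient is the propagation of base point freeness along the induction, which is already encoded in the lemma's output.
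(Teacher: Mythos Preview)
Your proposal is correct and takes essentially the same approach as the paper: iterate Lemma~\ref{decomposition1} to produce each successive quadruple, and note that the process terminates because $\{h^0(L_i)\}$ is a strictly decreasing sequence of non-negative integers. The paper's own proof consists of exactly those two observations; your extra care about the base-point-freeness hypothesis at step $0$ is reasonable (the paper simply applies the lemma without comment), though your fix of ``replacing $L$ by its proper transform'' sits in mild tension with property~(1)---a cleaner patch is to observe directly that a general fiber $F_0$ is not contained in the base locus of $|L_0|$ once $h^0(L_0)>0$, so $h^0(L_0 - a_0F_0) < h^0(L_0)$ already holds.
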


\begin{proof}
The quadruple $(X_{i+1}, L_{i+1}, Z_{i+1}, a_{i+1})$ is obtained by applying Theorem \ref{decomposition1} to
$(X_i, L_i, Z_i, a_i)$. The whole process terminates because $h^0(L_i)$ decreases strictly as $i$ goes larger and they are non-negative.
\end{proof}

\subsection{Numerical inequalities}
Resume the notations in Theorem \ref{decomposition}.
Denote
$$
L'_i=L_i-a_iF_i
$$
for $i=0, \cdots, N$.
Note that we have
$L_i|_{F_i}=L'_i|_{F_i}$ following from the construction.
Denote
$$
r_i=h^0(L_i|_{F_i}), \quad d_i=(L_i|_{F_i})^{n-1}.
$$

From our construction, we know that $L'_i+F_i$ is nef for each $i=0, \cdots, N$. Therefore, we have
\begin{lemma} \label{easylemma}
With the above notations, for any $i=0, \cdots, N$,
$$
L'^n_i+nd_i=(L'_i+F_i)^n \ge 0.
$$
\end{lemma}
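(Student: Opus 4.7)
The plan is to expand $(L'_i + F_i)^n$ directly by the binomial theorem, exploit the fact that the fiber class squares to zero, and then verify nefness so the inequality part is automatic. Everything reduces to routine intersection theory once the correct setup is in place.

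First, I would check that $L'_i + F_i$ is nef. Since $L'_i + F_i = L_i - (a_i - 1)F_i$ and $a_i = e_{L_i} \ge 1$, the minimality clause in the definition of $e_{L_i}$ (namely that $L_i - eF_i$ is nef for every integer $e < e_{L_i}$) applies with $e = a_i - 1$. This gives nefness, and hence $(L'_i + F_i)^n \ge 0$ follows from the standard fact that top self-intersections of nef divisors on a smooth projective variety are nonnegative.

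Second, I would compute $(L'_i + F_i)^n$. Because $F_i$ is the pullback of the general fiber $F$ of the fibration $f: X \to Y$ over a smooth curve, $F_i$ is numerically equivalent to the pullback of a point of $Y$, so $F_i^2 \equiv 0$ numerically. Consequently every term in the binomial expansion involving $F_i^k$ with $k \ge 2$ vanishes, and only the first two terms survive:
$$
(L'_i + F_i)^n = L'^n_i + n\, L'^{n-1}_i \cdot F_i.
$$

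Finally, since $L_i|_{F_i} = L'_i|_{F_i}$ (as recorded in the text just above the lemma), I would identify
$$
L'^{n-1}_i \cdot F_i = (L'_i|_{F_i})^{n-1} = (L_i|_{F_i})^{n-1} = d_i,
$$
yielding the claimed equality $L'^n_i + n d_i = (L'_i + F_i)^n$, and then invoke nefness to conclude. The only subtle point is the nefness of $L'_i + F_i$, which is entirely controlled by the choice $a_i = e_{L_i}$; after that, the argument is just the binomial expansion and $F_i^2 \equiv 0$.
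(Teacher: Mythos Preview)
Your proposal is correct and follows exactly the approach the paper intends: the text immediately preceding the lemma already asserts that $L'_i+F_i$ is nef (for the very reason you give, namely $a_i-1<e_{L_i}$), and the lemma is then stated without proof as an immediate consequence. You have simply written out the two implicit steps---the binomial expansion with $F_i^2\equiv 0$ and the identification $L'^{n-1}_iF_i=d_i$---so there is nothing to add.
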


\begin{prop} \label{algcase1}
For any $j=0, 1, \cdots, N$, we have the following numerical inequalities:
\begin{itemize}
\item[(1)] $\displaystyle{h^0(L_0) \le h^0(L'_j) + \sum_{i=0}^j a_{i} r_{i}}$;
\item[(2)] $\displaystyle{L^n_0 \ge n(a_0-1)d_0 + n \sum_{i=1}^j a_i d_i}$. \\
\end{itemize}
\end{prop}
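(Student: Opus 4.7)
For part (1), I would proceed by iterating, at each level $i$, the short exact sequence
\[ 0 \to \mathcal O_{X_i}(L_i - kF_i) \to \mathcal O_{X_i}(L_i - (k-1)F_i) \to \mathcal O_{F_i}(L_i|_{F_i}) \to 0 \]
for $k = 1, \ldots, a_i$, using that $F_i|_{F_i} \equiv 0$ (as $F_i$ is a fiber of $X_i \to Y$) so that $(L_i - (k-1)F_i)|_{F_i} = L_i|_{F_i}$, which has exactly $r_i$ global sections. This yields $h^0(L_i) \le h^0(L'_i) + a_i r_i$. I would then observe that $h^0(L'_i) = h^0(L_{i+1})$, because every section of $\pi_i^*L'_i = L_{i+1} + Z_{i+1}$ vanishes along the fixed part $Z_{i+1}$ and hence comes from a section of $L_{i+1}$. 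Telescoping these estimates from $i = 0$ up to $i = j$ then produces (1).

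For part (2), I would prove by downward induction on $i$, from $i = j$ down to $i = 0$, the sharper statement
\[ L_i^n \ge n(a_i - 1)d_i + n\sum_{k=i+1}^j a_k d_k, \]
the case $i = 0$ being the inequality claimed. The algebraic backbone of the argument consists of two identities, both consequences of $F_i^2 = 0$ together with $L'_i|_{F_i} = L_i|_{F_i}$: namely $L_i^n = (L'_i)^n + na_id_i$ and $(L'_i + F_i)^n = (L'_i)^n + nd_i$. The base case $i = j$ is immediate: Lemma \ref{easylemma} gives $(L'_j)^n \ge -nd_j$, whence $L_j^n \ge n(a_j-1)d_j$.

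The only substantive obstacle is the key inequality
\[ (L'_i + F_i)^n \ge (L_{i+1} + F_{i+1})^n \]
that will drive the inductive step. To prove it, I would pull $L'_i + F_i$ back to $X_{i+1}$ to rewrite the left side as $(L_{i+1} + F_{i+1} + Z_{i+1})^n$, and apply the factorization
\[ (L_{i+1}+F_{i+1}+Z_{i+1})^n - (L_{i+1}+F_{i+1})^n = Z_{i+1}\cdot\sum_{k=0}^{n-1}(L_{i+1}+F_{i+1}+Z_{i+1})^{n-1-k}(L_{i+1}+F_{i+1})^k. \]
Crucially, both $L_{i+1} + F_{i+1}$ and $L_{i+1} + F_{i+1} + Z_{i+1} = \pi_i^*(L'_i + F_i)$ are nef, so each summand in the bracket is a product of nef classes, and its intersection with the effective divisor $Z_{i+1}$ is non-negative (by restricting to each irreducible component of $Z_{i+1}$ and using that a top intersection of nef classes on a projective variety is non-negative). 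Once this key estimate is established, combining it with $(L'_i)^n = (L'_i + F_i)^n - nd_i$, the identity $L_i^n = (L'_i)^n + na_id_i$, and the inductive hypothesis for $L_{i+1}^n$ yields the inductive step after a line or two of bookkeeping.
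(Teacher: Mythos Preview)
Your argument for part (1) is essentially identical to the paper's: both iterate the restriction sequence to the fiber $a_i$ times and use $h^0(L'_i)=h^0(L_{i+1})$ to telescope.

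For part (2), your route is genuinely different from the paper's and in fact cleaner. The paper works directly with the non-nef classes $L'_i$: it expands $L'^n_i-L'^n_{i+1}$ via the factorization $(\pi_i^*L'_i-L'_{i+1})\sum_p(\pi_i^*L'_i)^{n-1-p}L'^p_{i+1}$, then has to add and subtract fiber terms to rewrite the $Z_{i+1}$-piece as an intersection with products of the nef classes $\pi_i^*L'_i+F_{i+1}$ and $L'_{i+1}+F_{i+1}$, picking up a correction $-n(d_i-d_{i+1})$ that telescopes at the end. You instead compare the nef classes $L'_i+F_i$ and $L_{i+1}+F_{i+1}$ directly: since $\pi_i^*(L'_i+F_i)=(L_{i+1}+F_{i+1})+Z_{i+1}$ with both summands on either side nef, the inequality $(L'_i+F_i)^n\ge(L_{i+1}+F_{i+1})^n$ drops out immediately from the standard fact that $B^n\ge A^n$ whenever $A,B$ are nef with $B-A$ effective. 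Combined with the identities $L_i^n=(L'_i+F_i)^n+n(a_i-1)d_i$ and $(L_{i+1}+F_{i+1})^n=L_{i+1}^n+nd_{i+1}$, your downward induction goes through with no correction terms to track. The upshot is the same inequality, but your bookkeeping is shorter because you never leave the nef cone.
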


\begin{proof}
We have the following exact sequence:
$$
0 \longrightarrow H^0(L_{i+1}-F_{i+1}) \longrightarrow H^0(L_{i+1}) \longrightarrow H^0(L_{i+1}|_{F_{i+1}}).
$$
Then it follows that
$$
h^0(L_{i+1}-F_{i+1}) \le h^0(L_{i+1})-h^0(L_{i+1}|_{F_{i+1}})=h^0(L_{i+1})-r_{i+1}.
$$
So by induction, we have
$$
h^0(L'_{i+1})=h^0(L_{i+1}-a_{i+1}F_{i+1}) \le h^0(L_{i+1})-a_{i+1}r_{i+1} = h^0(L'_{i})-a_{i+1}r_{i+1}.
$$
Furthermore,
$$
h^0(L_0) \le h^0(L'_0)+a_0r_0.
$$
Hence (1) is proved by summing over $i=0, \cdots, j-1$.

To prove (2), note that both $L'_i+F_i$ and $L'_{i+1}+F_{i+1}$ are nef, and $Z_i$ is effective. We get
\begin{eqnarray*}
L'^n_{i}-{L'^n_{i+1}} & = &(\pi_i^*L'_i-L'_{i+1}) \sum_{p=0}^{n-1}   (\pi^*_iL'_i)^{n-1-p}  {L'^p_{i+1}}\\
&=& (a_{i+1}F_{i+1}+Z_{i+1}) \sum_{p=0}^{n-1}   (\pi^*_iL'_i)^{n-1-p}  {L'^p_{i+1}} \\
&=& a_{i+1} \sum_{p=0}^{n-1}   (\pi^*_iL'_i)^{n-1-p}  {L'^p_{i+1}}F_{i+1} \\
&& + \sum_{p=0}^{n-1}   (\pi^*_iL'_i + F_{i+1})^{n-1-p}  (L'_{i+1}+F_{i+1})^p Z_{i+1}\\
&&-\sum_{p=0}^{n-1} ( p ( \pi^*L'_i)^{n-1-p} L'^{p-1}_{i+1} + (n-p-1)(\pi^*L'_i)^{n-2-p} L'^{p}_{i+1}) )  F_{i+1}Z_{i+1} \\
& \ge & n a_{i+1} d_{i+1} - n \sum_{p=0}^{n-2}   (\pi^*_iL'_i)^{n-2-p} {L'^p_{i+1}} F_{i+1}Z_{i+1} \\
& = & n a_{i+1} d_{i+1} - n (\pi_i^*L'_i-L'_{i+1}) F_{i+1}\sum_{p=0}^{n-2}   (\pi^*_iL'_i)^{n-2-p} {L'^p_{i+1}} \\
& = & n a_{i+1} d_{i+1} -n(d_i-d_{i+1})
\end{eqnarray*}

Summing over $i=0, 1, \cdots, j-1$, we have
$$
{L'^n_0}  \ge  {L'^n_{j}} + n\sum_{i=1}^j a_{i}d_i- n(d_0-d_j).
$$
Since
$$
L^n_0-{L'^n_0}=na_0d_0.
$$
Then (2) follows by applying Lemma \ref{easylemma}.
\end{proof}

We also have the following lemma.
\begin{lemma}\label{algsumai}
With the above notations, we have
$$
L^n_0 \ge d_0(na_0+\sum_{i=1}^N a_i -n).
$$
\end{lemma}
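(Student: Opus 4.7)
My plan is to prove Lemma \ref{algsumai} by a pseudo-effectivity argument applied to an auxiliary nef bundle. Set $M := L_0 - (a_0 - 1) F_0$; this is nef by the very definition of $a_0 = e_{L_0}$. Since $F_0^2 = 0$ numerically, a direct binomial expansion gives
$$L_0^n \;=\; M^n + n(a_0 - 1) d_0,$$
so the lemma reduces to the cleaner statement $M^n \ge d_0 \sum_{i=1}^N a_i$.

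The heart of the argument is to show that $D := M - \bigl(\sum_{i=1}^N a_i\bigr) F_0$ is pseudo-effective on $X_0$. Iterating the defining relation $\pi_i^* L_i = L_{i+1} + a_i F_{i+1} + Z_{i+1}$, one obtains by induction on $N$ the decomposition on $X_N$:
$$\tilde\pi^* L_0 \;=\; L_N + \Bigl(\sum_{i=0}^{N-1} a_i\Bigr) F_N + Z^*,$$
where $\tilde\pi := \pi_{N-1}\circ\cdots\circ\pi_0 : X_N \to X_0$ and $Z^* := \sum_{i=1}^N \tilde\pi_i^* Z_i \ge 0$. Substituting and simplifying, the coefficient of $F_N$ collapses to $\sum_{i=0}^{N-1} a_i - (a_0 - 1) - \sum_{i=1}^N a_i = -(a_N - 1)$, yielding
$$\tilde\pi^* D \;=\; \bigl(L_N - (a_N - 1) F_N\bigr) + Z^*.$$
The first summand is nef by the definition of $a_N = e_{L_N}$, the second is effective, so $\tilde\pi^* D$ is pseudo-effective on $X_N$. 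Since $\tilde\pi$ is proper and birational, pseudo-effectivity descends: $\tilde\pi_* \tilde\pi^* D = D$ numerically and $\tilde\pi_*$ preserves the pseudo-effective cone, so $D$ is pseudo-effective on $X_0$.

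To finish, observe that $M^{n-1}$ is nef as a power of the nef class $M$, so the intersection of $M^{n-1}$ with the pseudo-effective class $D$ satisfies $M^{n-1}\cdot D \ge 0$. Rearranging,
$$M^n \;\ge\; \Bigl(\sum_{i=1}^N a_i\Bigr)\, M^{n-1}\cdot F_0 \;=\; d_0 \sum_{i=1}^N a_i,$$
where $M^{n-1}\cdot F_0 = L_0^{n-1}\cdot F_0 = d_0$ again uses $F_0^2 = 0$. Combining with $L_0^n = M^n + n(a_0-1) d_0$ gives the claim. The main delicacy is in verifying the inductive decomposition of $\tilde\pi^* L_0$ on $X_N$ and justifying that pseudo-effectivity descends along the birational pullback $\tilde\pi$; the positivity of intersection between nef powers and pseudo-effective classes is a standard input at the end.
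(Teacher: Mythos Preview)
Your proof is correct and is essentially the same argument as the paper's. Your $M$ is exactly the paper's $L'_0+F_0$, your decomposition $\tilde\pi^*D=(L_N-(a_N-1)F_N)+Z^*$ is the paper's identity $\tau_0^*L'_0\sim_{\mathrm{num}} L'_N+bF_N+Z$ rearranged, and your final step $M^{n-1}\cdot D\ge 0$ is the same positivity the paper obtains by expanding $(L'_0+F_0)^n$ on $X_N$ and dropping the nef and effective terms; the only cosmetic difference is that you phrase the positivity via pseudo-effectivity of $D$ on $X_0$ (which, as you note, follows from the projection formula) rather than computing the intersection directly on $X_N$.
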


\begin{proof}
For $i=0, \cdots, N-1$, denote by
$$
\tau_i=\pi_{i} \circ \cdots \circ \pi_{N-1} : X_N \to X_i
$$
the composition of blow-ups and denote $\tau_N = {\rm id}_{X_N}: X_N \to X_N$. 

Write $b=a_1+\cdots+a_N$ and $Z=\tau^*_1Z_1+ \cdots +\tau^*_NZ_N$. We have the following numerical equivalence on $X_N$:
$$
\tau^*_0 L'_0 \sim_{\rm{num}} L'_N + bF_{N} + Z.
$$
Since $L'_0+F_0$ and $L'_N+F_N$ are both nef, it follows that
\begin{eqnarray*}
(L'_0+F_0)^n & = &(\tau^*_0 L'_0 + F_{N})^{n-1}(L'_N+F_N+bF_N+Z) \\
& \ge & (\tau^*_0 L'_0 + F_{N})^{n-1}(L'_N+F_N) + b(\tau^*_0 L'_0 + F_{N})^{n-1} F_N \\
& \ge & b d_0.
\end{eqnarray*}
Combining with
$$
L_0^n-(L'_0+F_0)^n=n(a_0-1)d_0,
$$
the proof is finished.
\end{proof}

We end up this section with a remark that when $X$ is a fibered surface, or even $X$ is an arithmetic surface,
all the above results have been studied in \cite{YZ1, YZ2}.

\section{Relative Noether inequality}
In this section, we will prove the relative Noether type inequality.

We say a divisor $D$ on a variety $X$ of dimension $n$ is \textit{pseudo-effective}, if for any nef line bundles $A_1, \cdots, A_{n-1}$ on $X$, we have
$$
A_1 \cdots A_{n-1} D \ge 0.
$$

\begin{prop} \label{strongrelnoether}
Let $f: X \to Y$ be a fibration from $X$ to a smooth curve $Y$ with the general smooth fiber $F$ of general type.
Suppose that $L$ is a nef line bundle on $X$.
Fix a line bundle $B$ on $F$ such that
\begin{itemize}
\item $\delta_i=(L|_F)^{n-i-1} B^i>0$ for any $i \ge 0$,
\item $|B|$ is base point free on $F$,
\item $k B-L|_F$ is pseudo-effective on $F$ for certain integer $k > 0$,
\item $L|_F \le K_F$.
\end{itemize}
Then one has
$$
h^0(L) - \frac {1}{2 n!} L^n \le c_n \left(\frac{L^n}{\delta_0}+1\right) \sum_{j=1}^{n-1}k^{j-1}\delta_j + \frac {\delta_0}{2(n-1)!}.
$$
Here $c_n \ge 1$ is a constant depending only on the number $n$.
\end{prop}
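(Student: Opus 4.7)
The plan is to induct on the dimension $n$ of $X$, taking $n = 2$ as the base case.

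When $n = 2$, the fiber $F$ is a smooth curve of genus at least $2$, and the hypothesis $L|_F \le K_F$ makes each $L_i|_{F_i}$ a special divisor on the curve $F_i$. Clifford's inequality then yields $r_i \le \frac{d_i}{2} + 1$. Substituting into Proposition~\ref{algcase1}(1) with $j = N$, and using $h^0(L'_N) = 0$ from Theorem~\ref{decomposition}(3), one obtains
$$h^0(L) \le \tfrac12 \sum_i a_i d_i + \sum_i a_i.$$
Bounding the first sum by Proposition~\ref{algcase1}(2) and the second by Lemma~\ref{algsumai}, together with $\delta_1 \ge 1$, yields the claim with $c_2 = 2$.

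For the inductive step, I apply Theorem~\ref{decomposition} and Proposition~\ref{algcase1}(1) as before, reducing the problem to bounding each $r_i = h^0(L_i|_{F_i})$. I set up an auxiliary fibration on (a birational model of) $F_i$ using a general pencil $\Lambda \subset |B|$: since $|B|$ is base-point-free on $F$, such a pencil defines a rational map $F \dashrightarrow \mathbb P^1$ whose resolution, combined with the birational identification of $F_i$ with $F$, gives a fibration $\tilde F_i \to \mathbb P^1$ with smooth general fiber a member $D \in |B|$. I then apply the proposition in dimension $n-1$ to $\tilde F_i$ with nef line bundle $L_i|_{\tilde F_i}$ and auxiliary line bundle $B|_D$ on $D$. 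The four hypotheses descend with the same $k$: the mixed degrees shift to $\tilde\delta_j^D = (L|_F)^{n-2-j} B^{j+1} = \delta_{j+1}$; base-point-freeness of the restriction $B|_D$ is inherited; pseudo-effectivity of $kB - L|_F$ restricts to that of $kB|_D - L|_D$; and by adjunction $K_D - L|_D = (K_F - L|_F)|_D + B|_D$ is effective. Induction therefore yields
$$r_i \le \frac{d_i}{2(n-1)!} + c_{n-1}\Bigl(\frac{d_i}{\delta_1} + 1\Bigr)\sum_{j=1}^{n-2} k^{j-1}\delta_{j+1} + \frac{\delta_1}{2(n-2)!}.$$

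Multiplying by $a_i$, summing, and inserting $\sum a_i d_i \le \frac{L^n}{n} + \delta_0$ from Proposition~\ref{algcase1}(2) and $\sum a_i \le \frac{L^n}{\delta_0} + n$ from Lemma~\ref{algsumai}, the main term $\frac{L^n}{2n!}$ and the correction $\frac{\delta_0}{2(n-1)!}$ emerge from the first summand. For the remainder, the key estimate is $\delta_0 \le k\delta_1$, obtained by intersecting the pseudo-effective class $kB - L|_F$ with $(L|_F)^{n-2}$; this allows $\frac{1}{\delta_1}$ to be replaced by $\frac{k}{\delta_0}$, and the tail $\frac{\delta_1}{2(n-2)!}\sum a_i$ to be absorbed into the $j=1$ summand of $\sum_{j=1}^{n-1} k^{j-1}\delta_j$. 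Choosing $c_n$ large enough to dominate all the resulting constants (depending only on $n$) completes the induction.

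The main obstacle is the inductive setup on the fiber: constructing the auxiliary fibration on $F_i$ and verifying that all four hypotheses, especially pseudo-effectivity of $kB - L|_F$ and $L|_F \le K_F$, propagate to the new fiber $D$ with the \emph{same} constant $k$. Once this is secured, the remaining arithmetic is largely routine, but care is required to avoid spurious factors of $\delta_0/\delta_1$ so that the coefficients land exactly in the form $c_n(\frac{L^n}{\delta_0} + 1)\sum_{j=1}^{n-1} k^{j-1}\delta_j + \frac{\delta_0}{2(n-1)!}$.
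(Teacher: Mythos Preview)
Your overall architecture---induction on $n$, Theorem~\ref{decomposition} to produce the chain $(X_i,L_i,a_i)$, Proposition~\ref{algcase1} and Lemma~\ref{algsumai} to trade $\sum a_ir_i$ and $\sum a_i$ for $L^n/\delta_0$, and a pencil in $|B|$ to fiber $F_i$ over $\mathbb P^1$---is exactly the paper's. Your base case via Clifford is a legitimate (and more elementary) alternative to the paper's citation of the surface relative Noether inequality from \cite{YZ2}.

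There are, however, two genuine gaps in the inductive step.

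\emph{First}, the quantities you call $\tilde\delta_j^D$ are not $(L|_F)^{n-2-j}B^{j+1}=\delta_{j+1}$: the line bundle being restricted is $L_i$, not $L$, so in fact $\tilde\delta_j=(L_i|_{F_i})^{n-2-j}B_i^{j+1}\le\delta_{j+1}$. This inequality goes the right way in the sum $\sum k^{j-1}\tilde\delta_j$ and in the tail $\tilde\delta_0/2(n-2)!$, but it goes the \emph{wrong} way in the factor $d_i/\tilde\delta_0$: since $\tilde\delta_0\le\delta_1$, you only get $d_i/\tilde\delta_0\ge d_i/\delta_1$, so your displayed bound for $r_i$ does not follow from the induction hypothesis. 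The repair is to bound $d_i/\tilde\delta_0$ directly: intersecting the pseudo-effective class $kB_i-L_i|_{F_i}$ with $(L_i|_{F_i})^{n-2}$ gives $d_i\le k\tilde\delta_0$, hence $d_i/\tilde\delta_0\le k$. This is precisely your ``key estimate'' $\delta_0\le k\delta_1$, but it must be applied here at the level of $L_i$, before summing, rather than afterwards.

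\emph{Second}, and more seriously, you never check that the induction hypothesis is available. The proposition in dimension $n-1$ requires all the new $\tilde\delta_j>0$, in particular $\tilde\delta_0=(L_i|_{F_i})^{n-2}B_i>0$. When $d_i>0$ this follows from the estimate above (and iterates of it), but nothing in the construction prevents $d_i=0$: as $i$ increases the restrictions $L_i|_{F_i}$ get smaller and can cease to be big. In that case some $\tilde\delta_j$ may vanish and the inductive hypothesis simply does not apply. The paper treats this as a separate Case~II: if $d_i=0$ then $L_i|_{F_i}$ is not big, so $r_i=h^0(L_i|_{F_i})=h^0((L_i|_{F_i})|_{B_i})\le h^0(L_0|_B)$, and one bounds $h^0(L_0|_B)$ by induction on the fixed $(n-2)$-fold $B$ (using Clifford when $n=3$). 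Without this case split your argument is incomplete.
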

We point out that such $k$ always exists, because $B$ is big on $F$.

\begin{proof}
Our proof here is by induction. When $n=2$, since $\delta_0, \delta_1 > 0$,
the result just says $h^0(L)$ can be bounded in terms of $L^2$. In fact, if $\delta_0 \ge 2$,
from the relative Noether inequality in \cite{YZ2}, we have
$$
h^0(L) \le (\frac 14 + \frac {3}{4\delta_0}) L^2 + \frac {\delta_0+3}{2}.
$$
If $\delta_0=1$, then we can use the above inequality to bound $h^0(2L)$, which is enough to give a bound of $h^0(L)$.

Now we assume that the result holds for fibered varieties of dimension $\le n-1$ $(n \ge 3)$.

Resume the notations in Theorem \ref{decomposition}. By Proposition \ref{algcase1}, one has
\begin{eqnarray*}
h^0(L_0) & \le & h^0(L'_N) + \sum_{i=0}^N a_{i} r_{i}; \\
L^n_0 & \ge & n(a_0-1)d_0 + n\sum_{i=1}^N a_{i} d_i.
\end{eqnarray*}
Here $d_0=\delta_0$.
In the following, we will use induction to compare $r_i$ and $d_i$.

For any $i=1, \cdots, N$, write
$$
\rho_{i}=\pi_{0} \circ \cdots \circ \pi_{i-1}: X_i \to X_0
$$
and $\rho_0 = {\rm id}_{X_0}: X_0 \to X_0$.
Denote $B_0=B$ and $B_{i}=\rho^*_{i}B$.

\textbf{Case I}.
If $d_i>0$, choose two general members $B_{1, i}, B_{2, i} \in |B_i|$. Let $\sigma: \widetilde{F_i} \to F_i$ be the blow-up along their intersection. We get a fibration $f_i: \widetilde{F_i} \to \mathbb P^1$.
Denote the general fiber of $f_i$ by $\widetilde{B_i}$. Since $L_i|_{F_i}$ is big on $F_i$ and $L_i \le \rho^*_i L_0$, we can easily check the following facts:
\begin{itemize}
\item For each $j=1, \cdots, n-1$, we have
$$
\delta_j \ge (L_i|_{F_i})^{n-1-j} B^{j}_i=(\sigma^*L_i|_{\widetilde{B_i}})^{n-1-j} (\sigma^*B_i|_{\widetilde{B_i}})^{j-1}  > 0;
$$
\item $|\sigma^*B_i|_{\widetilde{B_i}}|$ is base point free;
\item $\sigma^*(kB_i-L_i)|_{\widetilde{B_i}}$ is pseudo-effective;
\item Since $L_i|_{F_i} \le \rho^*_i L|_{F_i} \le \rho^*K_F \le K_{F_i} $, by adjunction formula, one has
$$
\sigma^*L_i|_{\widetilde{B_i}} \le K_{\widetilde{B_i}}.
$$
\end{itemize}
By induction and using the fact that
$$
\frac{d_i}{(\sigma^*L_i|_{\widetilde{B_i}})^{n-2}} = \frac{(L_i|_{F_i})^{n-1}}{(L_i|_{F_i})^{n-2}B_i} \le \frac{k(L_i|_{F_i})^{n-2}B_i}{(L_i|_{F_i})^{n-2}B_i} =k,
$$
we have
$$
r_i \le \frac{1}{2(n-1)!} d_i + 2 c_{n-1} \sum_{j=1}^{n-1}k^{j-1}\delta_j.
$$

\textbf{Case II}.
If $d_i=0$, it implies that $L_i|_{F_i}$ is not big. Therefore
$$
r_i = h^0((L_i|_{F_i})|_{B_i}) \le h^0(L_0|_{B}).
$$
Note that $L_0|_{B}$ is nef and big on $B$,
$(L_0|_{B})^{n-2}=\delta_{1}$. So if $n-2 \ge 2$, we can use the blow-up trick as in Case I and get the following inequality by induction:
$$
r_i \le \frac{1}{2(n-2)!} \delta_{1} + 2 c_{n-2} \sum_{j=2}^{n-1}k^{j-2}\delta_j.
$$
The only problem is when $n=3$. However, in this case, the above inequality is nothing but Clifford's inequality.

As a result of the above two case, we are safe to use the following inequality:
$$
r_i \le \frac{1}{2(n-1)!} d_i + 2(c_{n-1}+c_{n-2}) \sum_{j=1}^{n-1}k^{j-1}\delta_j.
$$

Using the above comparison and Lemma \ref{algsumai}, it follows that
\begin{eqnarray*}
h^0(L_0) - \frac {1}{2 n!} L^n_0 & \le & 2(c_{n-1}+c_{n-2}) \sum_{j=1}^{n-1}k^{j-1}\delta_j\sum_{i=0}^N a_i + \frac {\delta_0}{2(n-1)!} \\
& \le & 2(c_{n-1}+c_{n-2}) \sum_{j=1}^{n-1}k^{j-1}\delta_j \left(\frac{L^n_0}{\delta_0}+1\right) + \frac {\delta_0}{2(n-1)!}
\end{eqnarray*}
Hence our result is proved by letting $c_n=2(c_{n-1}+c_{n-2})$.
\end{proof}

As a corollary, we have the following Hilbert-Samuel type result.
\begin{coro}\label{essential}
Let $X$ be an $n$-dimensional smooth projective variety of general type
and $L$ be a nef line bundle on $X$.
Fix a line bundle $B$ on $X$ such that
\begin{itemize}
\item $L \le K_X$;
\item $\delta_i=L^{n-i}B^i \ge 0$ for any $1 \le i \le n-1$;
\item $|B|$ is base point free on $X$ and $\delta_n=B^n>0$;
\item $k B-L$ is pseudo-effective on $X$ for certain integer $k > 0$.
\end{itemize}
Then one has
$$
h^0(L) - \frac {1}{2 n!} L^n \le C_n \sum_{j=1}^{n}k^{j-1}\delta_j.
$$
Here $C_n$ is a constant depending only on the number $n$.
\end{coro}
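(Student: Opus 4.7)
The plan is to reduce to Proposition \ref{strongrelnoether} by building a fibration over $\mathbb{P}^1$ on a blow-up of $X$, using a pencil inside $|B|$, and then to translate the resulting bound back to $X$ using the pseudo-effectivity of $kB - L$. After possibly replacing $B$ by a sufficiently large multiple (absorbing the rescaling into the constants $C_n$ and $k$), I may assume $h^0(B) \ge 2$. I then pick two general members $B_1, B_2 \in |B|$, let $\sigma: \widetilde{X} \to X$ be the blow-up along $B_1 \cap B_2$, and let $f: \widetilde{X} \to \mathbb{P}^1$ be the induced fibration. The general fiber $F$ is the proper transform of a general $B' \in |B|$, which is smooth by Bertini and of general type, since $K_{B'} = (K_X+B)|_{B'}$ is big (both $K_X$ and $B$ being big and $B$ nef).

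Setting $\widetilde{L} = \sigma^* L$ and $B_F = (\sigma^*B)|_F$, I verify the hypotheses of Proposition \ref{strongrelnoether}: base-point-freeness of $|B_F|$ is immediate from that of $|B|$; the bound $\widetilde{L}|_F \le K_F$ comes from $L \le K_X$ together with adjunction; pseudo-effectivity of $kB_F - \widetilde{L}|_F$ is inherited from that of $kB-L$ by restriction to a general complete intersection; and the projection formula gives $(\widetilde{L}|_F)^{n-1-i}(B_F)^i = L^{n-1-i}B^{i+1} = \delta_{i+1}$ for $0 \le i \le n-2$.

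Applying the proposition and using $h^0(\widetilde L) = h^0(L)$ together with $\widetilde L^n = L^n$ then gives
$$
h^0(L) - \frac{1}{2n!}L^n \le c_n\Bigl(\frac{L^n}{\delta_1}+1\Bigr)\sum_{j=1}^{n-1}k^{j-1}\delta_{j+1} + \frac{\delta_1}{2(n-1)!}.
$$
To put this in the desired form I use that $(kB-L)\cdot L^{n-1} \ge 0$ to conclude $L^n \le k\delta_1$, so $L^n/\delta_1 + 1 \le k+1$. Reindexing $\sum_{j=1}^{n-1}k^{j-1}\delta_{j+1} = k^{-1}\sum_{j=2}^n k^{j-1}\delta_j$ and absorbing $\delta_1/(2(n-1)!)$ into the $j=1$ term bounds the right-hand side by $C_n\sum_{j=1}^n k^{j-1}\delta_j$ for a suitable constant $C_n$ depending only on $n$.

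The main obstacle is not this final arithmetic but the geometric setup: ensuring that the general fiber $F$ is truly of general type, that the condition $\widetilde{L}|_F \le K_F$ survives the blow-up, and that one can reliably arrange $h^0(B) \ge 2$ without losing the quantitative control of constants. I also need to handle the degenerate case $\delta_1 = 0$, in which $L^n \le k\delta_1$ forces $L^n = 0$ and the quotient $L^n/\delta_1$ is meaningless; there I would bypass the proposition's full statement and instead run the Case II / Clifford-type argument from its proof directly on $L|_F$, which bounds $h^0(L|_F)$ using only $\delta_2, \ldots, \delta_n$, and then reconstruct $h^0(L)$ via the reduction process of Section 2.
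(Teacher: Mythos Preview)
Your approach is essentially the paper's: build a pencil in $|B|$, blow up its base locus, and apply Proposition~\ref{strongrelnoether} to the resulting fibration over $\mathbb{P}^1$. Two remarks. First, the preliminary replacement of $B$ by a multiple is unnecessary: base-point-freeness together with $B^n>0$ already forces $h^0(B)\ge 2$, and your proposed rescaling would in any case multiply the right-hand side by a factor depending on $B$, not only on $n$. Second, your plan for the degenerate case $\delta_1=0$ has a gap: once $d_0=(\widetilde L|_F)^{n-1}=\delta_1=0$, Lemma~\ref{algsumai} gives no control on $\sum a_i$, so the reduction process of Section~2 cannot by itself reconstruct a bound on $h^0(L)$ from the fiberwise Case~II estimates. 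The paper handles this case differently and more simply: when $L^n=0$ one has $h^0(L-B)=0$ (else $L\ge B$ would be big), hence $h^0(L)\le h^0(L|_{B'})$ for a general $B'\in|B|$, and one inducts on the dimension by iterating this restriction until the restricted $L$ becomes big (or $h^0\le 1$), then applies the blow-up trick there.
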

\begin{proof}
The idea is very similar to the proof of Proposition \ref{strongrelnoether}. In fact, in the previous proof,
we have shown how to prove this result in lower dimensional case. Here we only need to apply the blow-up trick once more.

Suppose $L^n>0$. Then $L$ is big. Hence $\delta_i>0$ for $i \ge 1$. Therefore, using the above blow-up trick on $X$, 
and the conclusion follows. 

If $L^n=0$, i.e., $L$ is not big. Since $B$ is big, we have
$h^0(L)=h^0(L|_B)$. By induction, we can find a positive integer $i_0 < n$ such that 
$$
h^0(L)=h^0(L|_{B^{i_0}}).
$$
Otherwise $h^0(L) \le 1$. But here we can apply the blow-up trick on $B^{i_0}$ to get the conclusion.
\end{proof}

At the end, we would like to mention that the same induction method can be applied to prove the relative Noether inequality in positive characteristic,
because the relative Noether inequality (resp. Clifford's inequality) still holds for Gorenstein fibered surfaces (resp. curves) \cite{Li2, YZ2}.

\section{Asymptotic behavior of cohomological dimensions}
Let $X$ be a projective and irregular variety. Let $A$ be the Albanese variety of $X$ of dimension $m=h^1(\mathcal O_X)>0$, and $a(X)$ be its Albanese image. Let $\mu_d: A \to A$ be the multiplicative map by $d$.
We have the following diagram:
$$
\xymatrix{X_d \ar[r]^{\phi_d} \ar[d]_{\alpha_d} & X \ar[d]^{\alpha={\rm Alb}_X} \\
A \ar[r]^{\mu_d} & A}
$$
Here $X_d=X \times_{\mu_d} A$ is the fiber product. We call $X_d$ \textit{the $d$-th Albanese lifting of $X$}.

In this section, we will prove

\begin{theorem} \label{albanese}
Let $X$ be a projective, smooth and irregular variety and $a(X)$ be its Albanese image. For each $d \in \mathbb N$, let $X_d$ be the $d$-th Albanese
lifting of $X$. Then for each $i=0, \cdots, \dim a(X)-1$, we have
$$
\lim_{d \to \infty} \frac{h^i(\mathcal O_{X_d})}{d^{2m}}=0
$$
Here $m=h^1(\mathcal O_X)$.
\end{theorem}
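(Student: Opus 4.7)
The plan is to decompose $h^i(\mathcal O_{X_d})$ via the étale cover structure and then apply generic vanishing to each summand. Since $\mu_d\colon A\to A$ is étale Galois of degree $d^{2m}$ with group $A[d]$, the base-changed map $\phi_d\colon X_d\to X$ inherits the same structure. The standard character decomposition gives
$$
(\phi_d)_*\mathcal O_{X_d}=\bigoplus_{\chi} P_\chi,
$$
where $\chi$ runs over the characters of $A[d]$ and the $P_\chi$ run over the $d$-torsion elements of $\mathrm{Pic}^0(X)\cong A^\vee$. Hence
$$
h^i(\mathcal O_{X_d})=\sum_{\chi} h^i(X,P_\chi),
$$
and the task splits into controlling how many characters contribute and how large each contribution is.

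A uniform bound $h^i(X,L)\le M_i$ for all $L\in\mathrm{Pic}^0(X)$ follows immediately from upper semicontinuity of cohomology on the compact space $\mathrm{Pic}^0(X)$. For the count, I would invoke the Green--Lazarsfeld generic vanishing theorem \cite{GL}: for each $i<\dim a(X)$, the cohomological support locus
$$
V^i(\mathcal O_X)=\{L\in\mathrm{Pic}^0(X):h^i(X,L)>0\}
$$
has codimension at least $\dim a(X)-i\ge 1$ in $\mathrm{Pic}^0(X)$. Combined with the structure theorem of Beauville--Green--Lazarsfeld--Simpson, every irreducible component of $V^i(\mathcal O_X)$ is a torsion translate $t+B$ of an abelian subvariety $B\subseteq A^\vee$, and the codimension estimate forces $\dim B\le m-\dim a(X)+i\le m-1$.

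The key counting step is then elementary: if $(t+B)\cap A^\vee[d]$ is non-empty, pick $x_0$ in it; any other $d$-torsion point in $t+B$ has the form $x_0+b$ with $b\in B\cap A^\vee[d]\subseteq B[d]$, a group of order $d^{2\dim B}\le d^{2(m-1)}$. Summing over the finitely many irreducible components of $V^i(\mathcal O_X)$ gives
$$
\#\{\chi:P_\chi\in V^i(\mathcal O_X)\}\le C\cdot d^{2(m-1)}
$$
for a constant $C=C(X,i)$. Combining with the uniform bound yields $h^i(\mathcal O_{X_d})\le M_i\,C\,d^{2(m-1)}$, so $h^i(\mathcal O_{X_d})/d^{2m}$ is $O(d^{-2})$ and tends to $0$.

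The main obstacle is precisely this torsion-point count: the codimension bound alone is not enough, since a codimension-one subvariety of $A^\vee$ could in principle contain a positive fraction of the $d^{2m}$ torsion characters. What rescues the argument is the stronger structural fact that the components of $V^i(\mathcal O_X)$ are torsion translates of abelian subvarieties, so counting $d$-torsion on each component reduces to counting $d$-torsion on an abelian variety of strictly smaller dimension, producing the saving factor $d^2$ that makes the limit vanish.
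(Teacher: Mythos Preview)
Your argument is correct and matches the paper's proof almost line for line: the same \'etale decomposition $(\phi_d)_*\mathcal O_{X_d}=\bigoplus_{L\in T_d} L$ over the $d$-torsion of $\mathrm{Pic}^0(X)$, the same uniform bound $M_i$ via upper semicontinuity, and the same appeal to Green--Lazarsfeld to confine the nonvanishing characters to a proper closed locus $S^i(X)\subset\mathrm{Pic}^0(X)$.

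The one divergence is the final torsion-point count. You invoke the Simpson structure theorem to write each component of $S^i(X)$ as a torsion translate of a lower-dimensional abelian subvariety, which yields the explicit rate $h^i(\mathcal O_{X_d})=O(d^{2m-2})$. The paper simply asserts $\#(T_d\cap S^i(X))/\#T_d\to 0$ directly from the positive codimension of $S^i(X)$. Your stated worry that ``the codimension bound alone is not enough'' is misplaced: the $d$-torsion points of a complex torus equidistribute with respect to Haar measure as $d\to\infty$, so any proper closed subvariety (having measure zero) automatically captures a vanishing proportion of them---no structural input on $S^i(X)$ is required. Your route buys a sharper quantitative conclusion at the cost of the extra Simpson machinery; the paper's is shorter but non-effective.
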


Before the proof, we recall the statement of the generic vanishing theorem first, which is
due to Green and Lazarsfeld \cite{GL}.

Let $X$ be a projective, smooth and irregular variety over $\mathbb C$. Let
$$
S^i(X)=\{ L \in {\rm Pic}^0(X) : h^i(L) > 0\}, \quad i=0, \cdots, \dim X,
$$
and $a(X)$ be the Albanese image of $X$. By the semi-continuity theorem,
we know that $S^i(X)$ is closed for each $i$.

We have the following remarkable theorem:
\begin{theorem} [Green-Lazarsfeld]
For any $i=0, \cdots, \dim X$,  we have
$$
{\rm codim} \{S^i(X), {\rm Pic}^0(X) \} \ge \dim a(X) - i.
$$
\end{theorem}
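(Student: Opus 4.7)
This is Green-Lazarsfeld's generic vanishing theorem, and the plan is to follow their original Hodge-theoretic route: linearize the support locus $S^i(X)$ at a point, interpret the resulting tangent-space condition as a vanishing statement for cup products on sheaf cohomology, and then control the codimension via their derivative complex.

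The first step is to linearize infinitesimally. At any $L_0 \in S^i(X)$, identify $T_{L_0} \mathrm{Pic}^0(X) = H^1(X, \mathcal{O}_X)$. Standard deformation theory applied to the Poincar\'e bundle on $X \times \mathrm{Pic}^0(X)$ shows that the obstruction to $h^i(L)$ remaining constant in a tangent direction $v$ is the cup product $\cup v : H^i(X, L_0) \to H^{i+1}(X, L_0)$, whence
$$
T_{L_0} S^i(X) \subseteq \bigl\{ v \in H^1(X, \mathcal{O}_X) : \cup v = 0 \text{ on } H^i(L_0) \bigr\}.
$$

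The second step is to bound the right-hand side via the Albanese structure. Using Serre duality to view $v \in H^1(\mathcal{O}_X)$ as an element of $H^0(\Omega^1_X)^\vee$, cup product becomes the differential of a Koszul-type complex (the Green-Lazarsfeld derivative complex) built from $H^0(\Omega^1_X)$ acting on $\bigoplus_q H^q(\mathcal{O}_X)$. Its exactness is governed by the generic rank of the evaluation map $H^0(\Omega^1_X) \otimes \mathcal{O}_X \to \Omega^1_X$, which equals $\dim a(X)$ because the global one-forms on $X$ arise from $\Omega^1_A$ via $\alpha^*$ and the image $a(X)$ has that dimension. Evaluating this exactness at a general cohomology class forces $T_{L_0} S^i(X)$ to have codimension at least $\dim a(X) - i$ in $H^1(X, \mathcal{O}_X)$, and the same bound descends to $S^i(X) \subset \mathrm{Pic}^0(X)$.

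The heart of the argument, and the main obstacle, is the derivative-complex analysis in the second step: it rests on a delicate Hodge-theoretic computation that Green and Lazarsfeld carry out by an explicit Koszul argument. A cleaner modern reformulation, due to Hacon, routes everything through the Fourier-Mukai transform on $\widehat{A}$ and shows directly that each $R^j \alpha_* \omega_X$ is a GV-sheaf on $A$ in the sense of Pareschi-Popa; it bypasses the Koszul calculation at the cost of importing heavier sheaf-theoretic machinery.
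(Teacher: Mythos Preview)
The paper does not prove this statement at all: it is quoted as a known result of Green and Lazarsfeld with a citation to \cite{GL}, and is then used as a black box in the proof of Theorem~\ref{albanese}. So there is nothing to compare your proposal against in the paper itself.

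That said, your outline is a fair high-level sketch of the original Green--Lazarsfeld argument (infinitesimal description of $S^i(X)$ via cup products, followed by the derivative/Koszul complex controlled by the rank of the evaluation map on one-forms), and your mention of Hacon's Fourier--Mukai reformulation is accurate. If you intend this as an actual proof rather than a summary, the second step is where all the content lives and would need to be fleshed out substantially; as written it is a pointer to the literature rather than an argument. For the purposes of this paper, simply citing \cite{GL} (or Hacon's version) is what is done and what is appropriate.
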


With the above theorem, we can prove Theorem \ref{albanese}.
\begin{proof} [Proof of Theorem \ref{albanese}]
Recall our construction of the $d$-th Albanese
lifting. The map $\phi_d: X_d \to X$ is a finite abelian cover. Moreover,
$$
{\rm Gal}(X_d/X)={\rm Gal}(\mu_d)=(\mathbb Z/d \mathbb Z)^{2m}.
$$
Thus we have
$$
(\phi_d)_* \mathcal O_{X_d} = \bigoplus_{L \in T_d} L,
$$
where $T_d \cong (\mathbb Z/d \mathbb Z)^{2m}$ is the subgroup of ${\rm Pic}^0(X)$ consisting of
all the $d$-torsion line bundles on $X$.

For each $i=0, \cdots, \dim a(X)-1$, it follows from the generic vanishing theorem that $h^i(L)=0$ for each $L \in {\rm Pic}^0(X) \setminus {S^i(X)}$.
Also from the semi-continuity theorem, we can find an $M_i>0$ such that
$$
M_i = \max_{L \in {\rm Pic}^0(X)} h^i(L).
$$
Then we have
$$
h^i(\mathcal O_{X_d}) = h^i((\phi_d)_* \mathcal O_{X_d}) = \sum_{L \in T_n} h^i(L) \le l_d M_i,
$$
where $l_d= \# (T_d \cap S^i(X))$. Because $S^i(X)$ has positive codimension, one has
$$
\lim_{d \to \infty} \frac{\# (T_d \cap S^i(X))}{\# T_d}=0.
$$
Hence
$$
\lim_{d \to \infty} \frac{h^i(\mathcal O_{X_d})}{d^{2m}} = 0.
$$
\end{proof}

\section{Proof of Theorem \ref{severi}}
In this section, we give the proof of Theorem \ref{severi} using Corollary \ref{essential} and Theorem \ref{albanese} by constructing the Albanese
lifting of $X$. This construction has been used by Pardini \cite{Pa} in the proof of the surface case.

Let $X$ be as in Theorem \ref{severi}. Let $\varepsilon: X' \to X$ be the resolution of singularities of $X$.
Since $X$ is minimal, $X$ has only terminal singularities \cite{KMM}. It is known that terminal singularities are rational, so
we have
$$
\chi(\mathcal O_X)=\chi(\mathcal O_{X'}), \quad h^0(K_X)=h^0(K_{X'}).
$$
Furthermore, since $X$ is of maximal Albanese dimension, so is $X'$.

Denote
${\rm Alb}_{X'}: X' \to A$ to be the Albanese map of $X'$, where $A={\rm Alb}(X')$ is the Albanese variety of $X'$ whose dimension is
$m=h^1(\mathcal O_X)$.

Let $H$ be a very ample line bundle on $A$, and $L$ be the pull-back of a very general member of $|H|$
on $X'$ by ${\rm Alb}_{X'}$. Set
$$
\delta_i=(\varepsilon^*K_{X})^{n-i}L^i
$$
for $i=0, \cdots, n$.
Since $L$ is big, we are able to find a positive integer $k$ such that $h^0(kL-K_{X'})>0$.

Let $X_d$ (resp. $X'_d$) be the $d$-th Albanese lifting of $X$ (resp. $X'$).
It follows that $X_d$ is still normal, minimal and Gorenstein. So  
we have
$$
\chi(\mathcal O_{X'_d})=\chi(\mathcal O_{X_d})=d^{2m}\chi(\mathcal O_{X}), \quad h^0(K_{X_d})=h^0(K_{X'_d}).
$$

Write $\varepsilon_d: X'_d \to X_d$ to be the resolution of singularities of $X_d$.
Recall the diagram
$$
\xymatrix{X'_d \ar[r]^{\phi_d} \ar[d]_{\alpha_d} & X' \ar[d]^{\alpha={\rm Alb}_{X'}} \\
A \ar[r]^{\mu_d} & A}
$$
We have the following numerically equivalence on $A$ \cite{BL}:
$$
\mu_d^*H \sim_{\rm num} d^2 H,
$$
which yields
$$
\phi^*_d L \sim_{\rm num} d^2 L_d.
$$
Here $L_d=\alpha^*_d H$. It follows that for any $i=0, \cdots, n$,
$$
(\varepsilon^*_dK_{X_d})^{n-i}L^i_d=(\phi^*_d(\varepsilon^*K_{X}))^{n-i}L^i_d=d^{2m-2i}\delta_i.
$$
Furthermore, from the above numerical equivalence, we know that
$$
d^2 k L_d - K_{X'_d} \sim_{\rm num} \phi^*_d(kL-K_{X'})
$$
is pseudo-effective. Since $X$ is minimal and $\varepsilon^*_d K_{X_d} \le K_{X'_d}$, it implies that $d^2 k L_d - \varepsilon^*_d K_{X_d}$ is pseudo-effective.

Now, apply Corollary \ref{essential} to $\varepsilon^*_d K_{X_d}$ and it follows that
\begin{eqnarray*}
h^0(K_{X'_d}) & \le & \frac {1}{2 n!} K^n_{X_d} + C_n \sum_{j=1}^{n}(d^2k)^{j-1}(\varepsilon^*_d K_{X_d})^{n-j}L^j_d \\
& \le & \frac {d^{2m}}{2 n!} K^n_{X} + C_n d^{2m-2}\sum_{j=1}^{n} k^{j-1} \delta_j.
\end{eqnarray*}
On the other hand,
\begin{eqnarray*}
h^0(K_{X'_d}) & = & \chi(K_{X'_d}) - \sum_{j=0}^{n-1} (-1)^{n-j} h^i(\mathcal O_{X'_d}) \\
&=&d^{2m}\chi(K_{X}) - \sum_{j=0}^{n-1} (-1)^{n-j} h^j(\mathcal O_{X'_d}).
\end{eqnarray*}
So the proof of Theorem \ref{severi} is completed by letting $d \to \infty$ and applying Theorem \ref{albanese}.

\end{document}